\definecolor{myblue}{rgb}{0.09,0.32,0.44} %22-84-113
\def\bl{\begin{lemma}}
\def\el{\end{lemma}}
\def\bth{\begin{theorem}}
\def\eth{\end{theorem}}
\def\bc{\begin{corollary}}
\def\ec{\end{corollary}}
\def\bcj{\begin{conjecture}}
\def\ecj{\end{conjecture}}
\def\bpr{\begin{proposition}}
\def\epr{\end{proposition}}
\def\bde{\begin{definition}}
\def\ede{\end{definition}}
\def\H{\mathbb{H}}
\def\Pr{\mathbb{P}}
\newcommand{\be}{\begin{eqnarray}}
\newcommand{\ee}{\end{eqnarray}}
\newcommand{\R}{{\mathbb R}}
\renewcommand{\and}{\hbox{ {\rm and} }}
\newtheorem{theorem}{Theorem}[section]
\newtheorem{definition}{Definition}[section]
\newtheorem{lemma}[theorem]{Lemma}
\newtheorem{corollary}[theorem]{Corollary}
\newtheorem{proposition}[theorem]{Proposition}
\newtheorem{conjecture}[theorem]{Conjecture}
\newtheorem*{theorem*}{Theorem}
\theoremstyle{definition}
\numberwithin{equation}{section}
\begin{document}
\title{Hyperbolic self avoiding walk}
\author{Itai Benjamini \and Christoforos Panagiotis}

\date{25.3.2020}

\maketitle

\begin{abstract}
Expected ballisticity of a continuous self avoiding walk on hyperbolic spaces $\H^d$ is established. 

\end{abstract}

\section{Introduction}

Consider continuous $n$ steps random walk on $\H^d$, where the next step is chosen uniformly and independently on the unit sphere around the current location. Condition this uniform product measure
on sequences in which the distance between any pair of vertices is bigger than $c$, for some fixed $0 < c < 1$. 

We will write $\Pr_n$ for this measure, which will be called the SAW measure, and $\mathbb{E}_n$ for the expectation with respect to this measure. We denote by $x_0,x_1,\ldots,x_n$ the vertices of the walk in order of appearance and $d$ the distance function.

\begin{theorem}\label{main}
There exists a constant $C=C(d,c)>0$ such that $\mathbb{E}_n d(x_0,x_n) > Cn$ for every $n\geq 1$. 
\end{theorem}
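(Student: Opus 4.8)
The plan is to place the origin at the starting vertex, write $\rho_i = d(x_0,x_i)$, and reduce Theorem~\ref{main} to a lower bound on $\E_n \rho_n$. I would isolate the two sources of difficulty by writing the SAW measure as the free (unconditioned) product measure $\Q_n$ --- under which each direction is uniform and independent --- conditioned on the self-avoidance event $A_n = \{ d(x_i,x_j)\ge c \text{ for all } i\neq j\}$, so that $\Pr_n = \Q_n(\,\cdot \mid A_n)$. The heart of the argument is a purely geometric drift computation for $\Q_n$, after which the work is to show that the conditioning does not destroy this drift.

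\textbf{Radial drift under the free measure.} The key computation uses the hyperbolic law of cosines in the triangle $x_0 x_i x_{i+1}$, whose sides at $x_i$ have lengths $\rho_i$ and $1$ and enclose the angle $\gamma_i$ between the geodesic $[x_i,x_0]$ and the step $[x_i,x_{i+1}]$:
\[
\cosh\rho_{i+1} = \cosh\rho_i\cosh 1 - \sinh\rho_i\sinh 1\,\cos\gamma_i .
\]
Under $\Q_n$ the step direction is uniform, so $\cos\gamma_i$ is symmetric about $0$ and $\E_{\Q}[\cos\gamma_i \mid \mathcal{F}_i]=0$; hence $\E_{\Q}[\cosh\rho_{i+1}\mid\mathcal{F}_i]=\cosh 1\cdot\cosh\rho_i$, i.e. $\cosh\rho_i\,(\cosh 1)^{-i}$ is a martingale. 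Expanding for large $\rho_i$ gives the increment $\Delta_i=\rho_{i+1}-\rho_i\approx \log(\cosh 1-\sinh 1\cos\gamma_i)$, and symmetrizing $\cos\gamma_i$ shows $\E_{\Q}[\Delta_i\mid\mathcal{F}_i]\ge v_0>0$ uniformly (near the origin the factor $\coth\rho_i\ge 1$ only increases the drift). Since $|\Delta_i|\le 1$, the exponential supermartingale $e^{-s\rho_i}\lambda(s)^{-i}$, with $\lambda(s)=\sup_{\rho}\E_{\Q}[e^{-s\Delta}\mid\rho]<1$ for small $s>0$, yields the large deviation bound $\Q_n[\rho_n\le \epsilon n]\le e^{-I(\epsilon)n}$ with $I(\epsilon)\to I_\ast>0$ as $\epsilon\downarrow 0$.

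\textbf{Passing to the conditioned measure.} I see two routes. The first, cleaner if it can be pushed through, is to show that the conditioning only helps: the conditional law of the direction of step $i+1$ under $\Pr_n$, given the past $\mathcal{F}_i$, is the uniform law tilted by $g(x_{i+1})=\mathbf 1[d(x_{i+1},x_j)\ge c,\ j\le i]\cdot W(x_{i+1})$, where $W$ is the $\Q$-probability that the future avoids itself and the past. Both factors favour more outward positions --- the indicator because the past vertices lie behind, and $W$ because an outward $x_{i+1}$ leaves more room for the future --- so a correlation (FKG-type) inequality would give $\E_n[\Delta_i\mid\mathcal{F}_i]\ge v_0'>0$ and hence $\E_n\rho_n\ge v_0' n - O(1)$ directly. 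The second route is a large deviation comparison: bound the cost of self-avoidance by $\Q_n(A_n)\ge e^{-\beta n}$, using that the expected number of $c$-close pairs is only $O(n)$ because the ballistic drift pushes $d(x_i,x_{i+k})$ up geometrically in $k$, and then
\[
\Pr_n[\rho_n\le\epsilon n]=\frac{\Q_n[\rho_n\le\epsilon n,\,A_n]}{\Q_n(A_n)}\le e^{-(I(\epsilon)-\beta)n},
\]
which forces $\E_n\rho_n\ge (\epsilon/2) n$ for large $n$ as soon as $I(\epsilon)>\beta$ (small $n$ being absorbed into $C$).

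\textbf{Main obstacle.} In both routes the difficulty lies entirely in the conditioning. For the first route, the obstruction is making the ``conditioning biases outward'' heuristic rigorous: the weight $W(x_{i+1})$ depends on the entire past configuration in a complicated way, and I do not see a clean monotonicity that survives the possibility that the walk has looped, so that not all past vertices lie strictly behind $x_i$. For the second route, the obstruction is matching the rates for every $c\in(0,1)$: the self-avoidance cost $\beta=\beta(c)$ stays bounded but increases with $c$, while $I_\ast$ is fixed by the dimension, so $I(\epsilon)>\beta$ is not automatic and may require a coarse-graining or renewal refinement. I expect the decisive step to be a quantitative version of one of these two mechanisms.
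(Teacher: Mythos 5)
Your proposal does not follow the paper's route, and, as you yourself flag, both of your proposed mechanisms for passing from the free measure to the conditioned one have unresolved gaps; these gaps are genuine rather than merely technical. For the FKG route, there is no known correlation inequality of this kind for self-avoiding walk: the future-avoidance weight $W(x_{i+1})$ is not a monotone functional of the radial coordinate in any partial order that supports such an inequality, and the possibility that earlier vertices lie ahead of $x_i$ (the walk can loop around) breaks even the heuristic. For the large-deviation route, the two rates are governed by different parameters and there is no reason for $I(\epsilon)>\beta$ to hold: $I_*=\lim_{\epsilon\downarrow 0}I(\epsilon)$ is fixed by $d$ alone (the entropy cost for the free walk to suppress its drift), while the self-avoidance cost $\beta=\beta(c,d)$ grows as $c\uparrow 1$, so for $c$ close to $1$ the bound $e^{-(I(\epsilon)-\beta)n}$ is vacuous. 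Since the theorem is asserted for every $c\in(0,1)$, neither mechanism closes without a substantially new input (a renewal or coarse-graining structure, say), and what you have is a program rather than a proof.

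The paper avoids the free-measure comparison entirely and argues directly under $\Pr_n$ with geometry. First, the $c$-separation of the vertices, combined with the Benjamini--Eldan bound on volumes of convex hulls in $\H^d$, forces a positive fraction of the $x_i$ to lie within distance $(1-c)/2$ of the boundary of the convex hull of the walk. Second, for such an index $i$ a local surgery (rotating the future portion of the walk about the nearest boundary points and resampling $r$ intermediate steps from sets of positive measure) produces, with uniformly positive conditional probability, a configuration in which the convex hulls of the past and the future are separated by hyperplanes at distance greater than $3\delta$; thin-triangle hyperbolicity then places $x_i$ within a bounded distance $\mathcal{C}$ of the geodesic $\overline{x_0x_n}$. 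Finally, since the balls of radius $c/2$ about the $x_i$ are disjoint, having order $n$ vertices in the $\mathcal{C}$-neighbourhood of $\overline{x_0x_n}$ forces that neighbourhood to have area of order $n$, while the $\mathcal{C}$-tube about a geodesic of length $k$ has area of order $k$, whence $d(x_0,x_n)$ is of order $n$ in expectation. The lesson is that the separation hypothesis is itself the source of ballisticity, via packing and convexity, and no drift computation for the unconditioned walk is needed.
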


For background on hyperbolic geometry see e.g. \cite{BI}. For background on self avoiding walks see
\cite{BDGS}. The papers \cite{DH, L,MW,P} contains results of the speed of self avoiding walks.

\section{Proof}

\begin{proof}

Let us first recall that there is a constant $\delta$ fixed throughout such that for every geodesic triangle in $\H^d$ with sides $\alpha,\beta,\gamma$ we have that $\alpha$ lies in the $\delta$ neighbourhood of $\beta\cup \gamma$. 

We will start by proving the following geometric lemma. 

\begin{lemma}\label{convex}
There is a constant $c_1>0$ such that for every finite set $A$ of vertices in $\H^d$ with the property that the distance between any two of its points is at least $c$, at least $c_1|A|$ vertices of $A$ are at distance at most $(1-c)/2$ from the boundary of the convex hull of $A$.
\end{lemma}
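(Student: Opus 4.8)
The plan is to exploit the two competing features of hyperbolic geometry that both push points towards the boundary: exponential volume growth and the thinness of geodesic simplices. Write $K=\mathrm{conv}(A)$, set $r=(1-c)/2$, and let $\rho=\tfrac14\min(c,1-c)$, so that $2\rho\le c$ and $\rho\le r/2$. Split $A$ into the deep points $A_{\mathrm{in}}=\{x\in A: d(x,\partial K)>r\}$ and the near-boundary points $A_{\mathrm{bd}}=A\setminus A_{\mathrm{in}}$; the claim is equivalent to $|A_{\mathrm{in}}|\le C|A_{\mathrm{bd}}|$ for a constant $C=C(d,c)$, since then $c_1=1/(C+1)$ works. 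Because the points of $A$ are $c$-separated, the balls $B(x,\rho)$, $x\in A$, are pairwise disjoint, and this is the device I will use throughout to pass between volumes and cardinalities.

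First, the easy direction (bounding the deep points). For $x\in A_{\mathrm{in}}$ every point of $B(x,\rho)$ lies at depth $>r-\rho\ge r/2$ inside $K$, so these disjoint balls sit inside the inner parallel body $K_{r/2}=\{y\in K: d(y,\partial K)\ge r/2\}$, giving $|A_{\mathrm{in}}|\,\mathrm{vol}(B(\rho))\le \mathrm{vol}(K_{r/2})$. The first genuine geometric input is the exponential-growth estimate: \emph{inner parallel bodies of a hyperbolic convex body lose a constant fraction of their volume per unit erosion.} I would obtain this from the linear hyperbolic isoperimetric inequality $\mathrm{area}(\partial K')\ge(d-1)\,\mathrm{vol}(K')$, valid for every bounded convex body $K'$, applied to the family $t\mapsto K_t$ together with the identity $\tfrac{d}{dt}\mathrm{vol}(K_t)=-\mathrm{area}(\partial K_t)$; these give $V'(t)\le-(d-1)V(t)$ and hence, by Gronwall, $\mathrm{vol}(K_{r/2})\le e^{-(d-1)r/2}\,\mathrm{vol}(K)$. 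Thus the shell $S=K\setminus K_{r/2}$ carries a definite fraction of the total volume, $\mathrm{vol}(K)\le\theta^{-1}\mathrm{vol}(S)$ with $\theta=1-e^{-(d-1)r/2}$.

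It remains to convert shell volume into near-boundary points, i.e. to prove $\mathrm{vol}(S)\le C'\,|A_{\mathrm{bd}}|\,\mathrm{vol}(B(\rho))$; combined with the two estimates above this yields $|A_{\mathrm{in}}|\le C|A_{\mathrm{bd}}|$ and hence the lemma. \textbf{This conversion is the main obstacle.} The difficulty is that $A$ need not be a net, so a priori the shell could carry large volume while containing few points of $A$. Here I would use that $\partial K$ is assembled from geodesic faces spanned by points of $A$, and that \emph{every geodesic simplex in $\H^{d}$ has volume bounded by a dimensional constant} (the maximal-simplex volume), so that a face far from all points of $A$ --- a ``hole'' --- can only contribute a bounded amount of shell volume. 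The thin-triangle property with constant $\delta$ enters to control these faces and the tube of width $r$ over them: each point of $S$ projects to $\partial K$, lands in some face, and is $\delta$-close to the lower skeleton of that face, so it can be charged to a nearby vertex of $A\subseteq A_{\mathrm{bd}}$. Making this charging injective up to bounded multiplicity --- so that the accumulated shell volume per near-boundary point stays bounded uniformly in the combinatorial complexity of $K$, rather than degrading with the number of faces in high dimension --- is the delicate point, and is exactly where the interplay of $\delta$-hyperbolicity with the bounded volume of hyperbolic simplices must do the work.
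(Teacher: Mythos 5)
There is a genuine gap, and it sits exactly where you flag it. The conversion of shell volume into near-boundary points --- your inequality $\mathrm{vol}(S)\le C'\,|A_{\mathrm{bd}}|\,\mathrm{vol}(B(\rho))$ --- is not a technical loose end but is essentially the whole content of the lemma, and it is precisely the external input the paper uses: the theorem of Benjamini and Eldan (\emph{Convex hulls in the hyperbolic space}, Geom.\ Dedicata 160 (2012)), which states that $\mathrm{vol}(\mathcal{H}(A))\le t(d)\,|A\cap\partial\mathcal{H}(A)|$ for any finite $A\subset\H^d$. Your proposed charging argument does not close this: the boundary $\partial K$ of the hull of $N$ points can have on the order of $N^{\lfloor d/2\rfloor}$ facets, so ``each face is a hyperbolic simplex of bounded volume, sum over faces'' gives a bound that is polynomial of degree $\lfloor d/2\rfloor$ in $|A_{\mathrm{bd}}|$, not linear, and you concede that you cannot make the charging boundedly multiplicative. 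Since a priori the shell could also be assembled from many thin tubes over faces that share the same few vertices, nothing in the sketch rules out $\mathrm{vol}(S)\gg|A_{\mathrm{bd}}|$ without the Benjamini--Eldan input (or an equivalent argument, which is a nontrivial piece of hyperbolic convex geometry in its own right).

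A secondary remark: once you grant the Benjamini--Eldan bound, your isoperimetric/inner-parallel-body step becomes superfluous. The paper's argument is just: the disjoint balls of radius $\min(c/2,(1-c)/2)$ around the deep points lie inside $K$, so $\mathrm{vol}(K)\ge t'|A_{\mathrm{in}}|$; on the other hand $\mathrm{vol}(K)\le t\,|A\cap\partial K|\le t\,|A_{\mathrm{bd}}|$ since the extreme points of the hull are at distance $0$ from $\partial K$; dividing gives $|A_{\mathrm{in}}|\le (t/t')|A_{\mathrm{bd}}|$ and hence the lemma. Your Gronwall estimate for $\mathrm{vol}(K_{r/2})$ is correct and a nice observation (inner parallel bodies of hyperbolic convex sets are convex and the linear isoperimetric inequality applies), but it only relocates the difficulty from $\mathrm{vol}(K)$ to $\mathrm{vol}(S)$, and bounding the shell by the number of boundary vertices is no easier than bounding the whole hull. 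So the proposal should be repaired by citing or proving the Benjamini--Eldan volume bound, at which point the first half of your write-up already finishes the job.
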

\begin{proof}
Notice that the convex hull $K$ of $A$ coincides with the convex hull of $A\cap \partial K$. According to \cite{BE}, there is a constant $t=t(d)$ such that the volume of $K$ is at most $t|A\cap \partial K|$. For every $x$ in $B:=\{x\in A \mid d(x,\partial K)>(1-c)/2\}$, the ball of radius $(1-c)/2$ around $x$ is contained in $K$, hence the volume of $K$ is greater than $t'|B|$ for a certain constant $t'>0$. The assertion follows now easily.
\end{proof}

For a set of vertices $A$, we will write $\mathcal{H}(A)$ for the convex hull of $A$. It follows from Lemma \eqref{convex} that there is a constant $R>0$ such that for every SAW $x$ of length $n$, the number of indices $i$ with $d(x_i,\partial \mathcal{H}(x)),d(x_j,\partial \mathcal{H}(x))\leq (1-c)/2$ for some $i+1\leq j\leq i+R$ is of order $n$. As there are finitely many choices for $j-i$, the number of indices $i$ with $d(x_i,\partial \mathcal{H}(x)),d(x_j,\partial \mathcal{H}(x))\leq (1-c)/2$ is of order $n$, where $1\leq r=r(A)\leq R$ is the number that appears most often. In fact, we can assume that $r$ is as large as we want. In particular, we can always choose $r>3\delta+1-c$. The reason for making this choice will become clear later.

For any number $\mathcal{C}>0$, we define $\mathcal{A}_i(\mathcal{C})$ to be the event that $x_i$ has distance at most $\mathcal{C}$ from the geodesic between $x_0$ and $x_n$. We also define $\mathcal{B}_i$ to be the event $d(x_i,\partial \mathcal{H}(x)),d(x_j,\partial \mathcal{H}(x))\leq (1-c)/2$. Our aim is to utilize the above observation in order construct SAWs that satisfy $\mathcal{A}_i(\mathcal{C})$.

\begin{lemma}\label{RD}
There are universal constants $0<C<1$, $\mathcal{C}>0$ such that $\Pr_n(\mathcal{A}_i(\mathcal{C}))\geq C\Pr_n(\mathcal{B}_i)$ for every large enough $m$ and every $i\geq 1$.
\end{lemma}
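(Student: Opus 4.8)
The plan is to prove the inequality by an \emph{unfolding} (reflection) surgery: I will build a map $\Phi$ that sends each SAW realising $\mathcal{B}_i$ to a SAW realising $\mathcal{A}_i(\mathcal{C})$, in such a way that $\Phi$ is bounded-to-one and preserves the conditioned uniform measure, so that $\Pr_n(\mathcal{A}_i(\mathcal{C}))\ge C\,\Pr_n(\mathcal{B}_i)$ follows by pushing measure forward. Fix a walk $x$ with $d(x_i,\partial\mathcal{H}(x)),d(x_{i+r},\partial\mathcal{H}(x))\le (1-c)/2$. Since $x_i$ lies within $(1-c)/2$ of the boundary of the convex hull, there is a geodesic hyperplane $P$ through (or within $(1-c)/2$ of) $x_i$ that supports $\mathcal{H}(x)$, i.e. the whole walk lies, up to the slack $(1-c)/2$, in one of the two closed half-spaces cut out by $P$. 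I would then hinge at $x_i$ and reflect the tail $x_{i+1},\dots,x_n$ across $P$, keeping $x_0,\dots,x_i$ fixed, to obtain the image walk $\Phi(x)=x_0,\dots,x_i,x_{i+1}',\dots,x_n'$. The second near-boundary point $x_{i+r}$ serves to pin down the direction of $P$ and, crucially, to supply a buffer arc $x_i,\dots,x_{i+r}$ of $r$ steps that stays near $P$.

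Three things must then be checked. First, \emph{$\Phi(x)$ is again admissible}: reflection is an isometry, so every step length stays equal to $1$ and all pairwise distances inside the fixed head and inside the reflected tail are preserved; the only genuinely new constraint is that head points do not come within $c$ of reflected-tail points. Because the head sits on one side of $P$ and the reflected tail on the other, any such pair is separated by $P$, and this is exactly where $r>3\delta+1-c$ enters: the $r$-step transition arc near the hinge provides enough length to absorb the $\delta$-fuzziness of thin triangles together with the $(1-c)$ hard-core margin, so the two pieces stay $c$-apart. Second, \emph{$\Phi(x)$ satisfies $\mathcal{A}_i(\mathcal{C})$}: the surgery fixes $x_i$ but moves the endpoint to $x_n'$ on the far side of $P$, so the new geodesic $[x_0,x_n']$ crosses $P$; since $x_i$ is an exposed near-boundary point lying on $P$ while $x_0,x_n'$ lie on opposite sides of it, $\delta$-hyperbolicity bounds $d(x_i,[x_0,x_n'])$ by a constant $\mathcal{C}=\mathcal{C}(\delta,c)$. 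Third, \emph{the measure is comparable}: $\Phi$ has unit Jacobian and maps $n$-step walks to $n$-step walks, hence preserves the conditioned uniform density, and since the reflection data (the hyperplane $P$ and the hinge index) can be recovered from $\Phi(x)$ up to bounded ambiguity, $\Phi$ is bounded-to-one; therefore $\Pr_n(\mathcal{A}_i(\mathcal{C}))\ge \Pr_n(\Phi(\mathcal{B}_i))\ge C\,\Pr_n(\mathcal{B}_i)$ for some $0<C<1$.

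I expect the main obstacle to be the first check, namely verifying that the unfolded walk still obeys $d(x_k,x_\ell)>c$ across the reflection. Controlling this means combining the supporting-hyperplane property with $\delta$-thinness to show that, away from the transition arc, the head and the reflected tail are genuinely separated by $P$, and then using the $r$-step buffer with $r>3\delta+1-c$ to handle the neighbourhood of the hinge, where the two pieces come closest. The second point, obtaining a \emph{universal} $\mathcal{C}$, is really the reason a near-boundary point is the right object to reflect: before the surgery $x_i$ sticks out of the hull, but after unfolding the formerly distant endpoint is thrown past it, so $x_i$ comes to lie essentially along the geodesic joining the new, far-apart endpoints.
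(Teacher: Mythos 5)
Your unfolding map is genuinely different from the paper's construction (which rigidly re-positions the block $x_{i+r},\dots,x_n$ so that the convex hulls of the head and the tail are separated by hyperplanes at distance $>3\delta$, and then \emph{resamples} the $r$ bridge steps from positive-measure domes), and as stated it has two real gaps. First, admissibility of $\Phi(x)$ fails. If $P$ is a true supporting hyperplane of $\mathcal{H}(x)$ then in general $x_i\notin P$, so the hinge step length $d(x_i,x_{i+1}')$ is no longer $1$ (reflection preserves $d(x_i',x_{i+1}')$, not $d(x_i,x_{i+1}')$). If instead you slide $P$ so that it passes through $x_i$, the walk lies in the closed half-space only up to a slack of $(1-c)/2$, and then reflection can \emph{decrease} cross distances: a head point at signed height $h_k<0$ and a tail point at height $h_\ell>0$ with nearby projections onto $P$ satisfy $d(x_k,x_\ell)\geq c$ before the surgery but can have $d(x_k,x_\ell')<c$ after $h_\ell\mapsto -h_\ell$, since $|h_k-(-h_\ell)|<|h_k-h_\ell|$ when $h_kh_\ell<0$. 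Crucially, such pairs occur wherever the walk returns to within $(1-c)/2$ of $P$, which can be arbitrarily far from the hinge, so the $r$-step buffer with $r>3\delta+1-c$ does not control them.

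Second, and more fundamentally, $\mathcal{A}_i(\mathcal{C})$ does not follow from ``$x_i$ lies on $P$ and $x_0$, $x_n'$ lie on opposite sides of $P$.'' The geodesic $\overline{x_0x_n'}$ does cross $P$, but the crossing point can be arbitrarily far from $x_i$ along $P$, so $\delta$-hyperbolicity gives no bound on $d(x_i,\overline{x_0x_n'})$. What forces the geodesic to pass near $x_i$ in the paper is a quantitative funnel: the two hulls are placed in half-spaces $H_1$, $H_2$ at distance $>3\delta$, with that distance \emph{realized} by $z_i$ (near $x_i$) and $z_{i+r}$, and Lemma \ref{two geodesics} then pins every connecting geodesic to within $2\delta$ of $\overline{z_iz_{i+r}}$. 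A single reflection creates no such gap: the head and the reflected tail both still accumulate on $P$ at many places, so there is no definite separation attained near $x_i$. (The measure-theoretic step is also delicate, since $P$ is a continuous parameter depending on the whole walk, which makes ``bounded-to-one'' problematic; but this is secondary.) To repair the argument you would essentially have to re-introduce the paper's ingredients: move the tail so that the two hulls are separated by a gap of more than $3\delta$ attained near $x_i$, and re-choose the $r$ bridge steps rather than reflect the tail in place.
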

\begin{proof}
Consider a SAW $x\in \mathcal{B}_i$. Our aim is to define a family of walks $x'$ for which the position of $x'_0,\ldots,x'_i$ and the relative position between the points $x'_{i+r},\ldots,x'_n$ is fixed and coincides with that for $x$, and furthermore, the event $\mathcal{A}_i(\mathcal{C})$ occurs.

To this end, let $z_i$, $z_{i+r}$ be the points in the boundary of $\mathcal{H}(x_0,\ldots,x_i)$, $\mathcal{H}(x_{i+r},\ldots,x_n)$ closest to $x_i$, $x_{i+r}$, respectively, and write $d$ for the distance between $z_i$ and $z_{i+r}$. Consider two separating hyperplanes $H_1$ and $H_2$ that pass through $z_i$ and $z_{i+r}$, respectively. In what follows, the position of $z_{i+r}$ and $x_{i+r},\ldots,x_n$ will change but for convenience we will keep the same notation throughout the proof.

Notice that we can rotate $\mathcal{H}(x_{i+r},\ldots,x_n)$ around $z_{i+r}$ and then around $z_i$ so that $H_2$ lies in the complement of the open ball of radius $d$ around $z_i$ and has distance $d$ from $H_1$. Now we can apply another isometry to $H_2$, if necessary, so that $H_1$ and $H_2$ are at distance $$r-d(x_i,z_i)-d(x_{i+r},z_{i+r})\geq r+c-1> 3\delta$$ apart and the distance is attained by the pair $z_i$, $z_{i+r}$. Adding the geodesic walk $y_i=x_i,\ldots,y_{i+r}=x_{i+r}$ of length $r$ that connects $x_i$ to $x_{i+r}$ results in a SAW of length $n$ because any $y_{i+1},\ldots,y_{i+r-1}$ is at distance at least $1-(1-c)/2>c$ from $H_1$, $H_2$. We can now construct a family of walks with the desired properties by choosing the $j$th step, $j=i+1,\ldots,i+r$ suitably, while keeping the distance between $H_1$ and $H_2$ close to $r-d(x_i,z_i)-d(x_{i+r}$ and larger than $3\delta$. In fact, these steps can be chosen from some domes $D_j$, $j=i,\ldots,i+r$ of positive area that differ only by a rotation. 

It follows from the next lemma that all these SAWs satisfy $\mathcal{A}_i(\mathcal{C})$. Since the domes $D_j$ have positive measure, the probability of $\mathcal{A}_i(\mathcal{C})$ conditioned on $x_0,\ldots,x_i$ and the relative position between the points $x_{i+r},\ldots,x_n$ is bounded from below by a uniform positive constant, whenever $x\in \mathcal{B}_i$. Taking expectation we obtain the desired assertion.
\end{proof}

In the following lemma, we will write $\overline{xy}$ for the geodesic between points $x$ and $y$ in $\H^d$ and $d(A,B) := \inf_{x \in A, y \in B} d(x,y) \geq \delta$ for the distance between subsets of $\H^d$.

\begin{lemma}\label{two geodesics}
Consider two bi-infinite geodesics $A$, $B$ in $\H^d$
with $d(A,B)> 3\delta$. Assume that $x_0 \in A$ and $y_0 \in B$ realize the distance $d(A,B)$, then the following holds:
for any $x \in A, y \in B$ we have $d(\overline{xy},\overline{x_0y_0})< 2\delta$.
\end{lemma}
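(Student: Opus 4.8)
The plan is to realise $\overline{x_0y_0}$ as one side of the geodesic quadrilateral $Q$ with vertices $x_0,x,y,y_0$, whose remaining sides are $\overline{x_0x}\subset A$, $\overline{xy}$ and $\overline{yy_0}\subset B$ (the first and third are subarcs of the geodesics $A$ and $B$, by uniqueness of geodesics in $\H^d$), and then to locate a single well-chosen point of $\overline{x_0y_0}$ which the thin-triangle inequality forces to be close to $\overline{xy}$ rather than to $A$ or $B$.

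First I would record the geometry coming from the hypothesis that $x_0,y_0$ realise $d(A,B)=:D$. Since a realising pair yields a common perpendicular, $\overline{x_0y_0}$ meets $A$ at $x_0$ and $B$ at $y_0$ at right angles; hence, by the comparison inequality for a right-angled triangle in $\H^d$ (the foot of a perpendicular is the nearest point of a geodesic), the point $p_s\in\overline{x_0y_0}$ at arclength $s$ from $x_0$ satisfies $d(p_s,A)=s$ and $d(p_s,B)=D-s$.

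Next I would split $Q$ along the diagonal $\overline{x_0y}$ into the triangles $(x_0,y,y_0)$ and $(x_0,x,y)$ and apply $\delta$-thinness twice, but crucially with \emph{asymmetric} thresholds. Fix $s$ with $2\delta<s<D-\delta$; such an $s$ exists precisely because $D>3\delta$, and this is where the hypothesis enters. Applying thinness to $(x_0,y,y_0)$, the point $p_s$ on the side $\overline{x_0y_0}$ lies within $\delta$ of $\overline{x_0y}\cup\overline{yy_0}$; since $\overline{yy_0}\subset B$ and $d(p_s,\overline{yy_0})\ge d(p_s,B)=D-s>\delta$, it cannot be the arc $\overline{yy_0}$ that is close, so there is $q\in\overline{x_0y}$ with $d(p_s,q)\le\delta$. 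Applying thinness to $(x_0,x,y)$ to the side $\overline{x_0y}$ containing $q$, the point $q$ lies within $\delta$ of $\overline{x_0x}\cup\overline{xy}$. If it were $\overline{x_0x}\subset A$ that is close, we would get $d(p_s,A)\le d(p_s,q)+d(q,A)\le 2\delta$, contradicting $d(p_s,A)=s>2\delta$. Hence $d(q,\overline{xy})\le\delta$, and therefore $d(p_s,\overline{xy})\le d(p_s,q)+d(q,\overline{xy})\le 2\delta$, giving $d(\overline{x_0y_0},\overline{xy})\le 2\delta$.

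The main obstacle, and the point that makes the stated constants match, is exactly this asymmetry: one must \emph{not} take the midpoint of $\overline{x_0y_0}$ (which lies only $D/2>\tfrac32\delta$ from $A$ and $B$, too weak to beat the quadrilateral bound $2\delta$), but rather a point lying beyond distance $2\delta$ from $A$ while still more than $\delta$ from $B$; the window $(2\delta,D-\delta)$ is nonempty exactly under $D>3\delta$. The remaining routine points are the justification that $\overline{x_0x}\subset A$ and $\overline{yy_0}\subset B$, the perpendicularity/nearest-point claim giving $d(p_s,A)=s$ and $d(p_s,B)=D-s$, and upgrading the final $\le 2\delta$ to the strict $<2\delta$, which is routine (take $s$ in the open window and perturb, the estimates degrading continuously away from the extremal case).
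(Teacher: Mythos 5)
Your proof is correct, and it rests on the same skeleton as the paper's: both arguments split the geodesic quadrilateral $x_0xyy_0$ along the diagonal $\overline{x_0y}$, apply $\delta$-thinness to the two resulting triangles, and use $d(A,B)>3\delta$ to rule out proximity to the sides lying on $A$ and $B$. The difference is where the argument is anchored. The paper fixes a point $\gamma'(R)$ \emph{on the diagonal} $\overline{x_0y}$, chosen so that it lies at distance exactly $2\delta$ from the corresponding point of $\overline{x_0x}\subset A$, and shows by a short contradiction argument that this point is within $\delta$ of $\overline{xy}$ and, via the second triangle, within $\delta$ of $\overline{x_0y_0}$. You instead anchor at a point $p_s$ \emph{on} $\overline{x_0y_0}$ whose distances to $A$ and $B$ you control exactly, and chase it through the two triangles to land within $2\delta$ of $\overline{xy}$. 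Your version buys two things: it sidesteps the existence question for the parameter $R$, which the paper leaves unaddressed (one must check that $d(\gamma(R),\gamma'(R))$ actually attains the value $2\delta$ before either geodesic runs out), and it makes the role of the threshold $3\delta$ completely transparent as the nonemptiness of the window $(2\delta,D-\delta)$. Two small remarks. First, you do not need perpendicularity or nearest-point projections to get $d(p_s,A)=s$ and $d(p_s,B)=D-s$: the triangle inequality $d(p_s,A)\geq d(y_0,A)-d(y_0,p_s)=D-(D-s)=s$, together with $d(p_s,x_0)=s$, already gives both, using only that $x_0,y_0$ realize $d(A,B)$. Second, your proposed perturbation in $s$ does not by itself upgrade $\leq 2\delta$ to $<2\delta$, since the bound is uniformly $2\delta$ over the whole window; the honest fix is to take $\delta$ strictly larger than the optimal thinness constant of $\H^d$, so that all thinness estimates are strict --- the paper's own proof is equally loose on exactly this point.
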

\begin{proof}
Let $x \in A, y \in B$ and write $\gamma$, $\gamma'$ for the geodesics starting at $x_0$ and ending at $x$, $y$, respectively, parametrized by arc-length. 

Consider an $R>0$ such that $d(\gamma(R),\gamma'(R))= 2\delta$. We claim that $d(\gamma'(R),\overline{xy})\leq \delta$. Indeed, suppose to the contrary that $d(\gamma'(R),\overline{xy})> \delta$. Since $d(\gamma'(R),\gamma\cup \overline{xy})<\delta$, there exists some $t\in \gamma$ such that $d(\gamma'(R),t)<\delta$. Hence $R-\delta< d(x_0,t)< R+\delta$, which implies that $d(\gamma(R),t)<\delta$. Thus $d(\gamma(R),\gamma'(R))<2\delta$ and this contradiction proves the claim.

Now for the triangle with vertices $x_0,y_0,y$ we have that 
$d(\gamma'(R),\overline{x_0y_0}\cup\overline{y_0y})<\delta$. But $d(\gamma'(R),\overline{y_0y})> \delta$ which shows that $d(\gamma'(R),\overline{x_0y_0})<\delta$ and completes the proof.
\end{proof}

Using Lemma \eqref{RD} and the fact that on any SAW of length $n$, the number of indices $i$ such that $\mathcal{B}_i$ occurs is of order $n$, we obtain that in a SAW of length $n$, the expected number of indices $i$, for which $x_i$ is within distance $\mathcal{C}$ to the geodesic between $x_0$ and $x_n$ is of order $n$. Since the open balls of radius $c/2$ around the vertices of the SAW are disjoint, we get that the $\mathcal{C}$-neighbourhood of the geodesic has expected area of order $n$. But the area of the $\mathcal{C}$-neighbourhood of a geodesic of length $k$ is of order $k$ for any $k\geq 1$. This completes the proof of the theorem.
\end{proof}

\section{Open problems}

In this section we will state some open problems. Given some $\varepsilon=\varepsilon(n)>0$, we consider continuous $n$ steps SAWs on $\H^d$, where now the steps are chosen uniformly from the sphere of radius $\varepsilon$ and we condition on sequences in which the distance between any pair of vertices is bigger than $c\varepsilon$, for some fixed $0 < c < 1$. We are interested in the behaviour of these SAWs as $\varepsilon$ tends to $0$. We will refer to the $n$ step model as the $(n,\varepsilon)$-SAW. Let us write $\mathbb{E}_{n,\varepsilon}$ for the expectation with respect to this measure.
 
\begin{conjecture}
$\lim_{n\to\infty} \mathbb{E}_{n,1/n}(d(x_0,x_n))=0$
\end{conjecture}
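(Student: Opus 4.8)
The plan is to exploit the scale structure of the model together with the observation that the ballistic mechanism behind Theorem \ref{main} operates only when the step size is comparable to the curvature radius. Since each step has length $1/n$, the total length of an $(n,1/n)$-SAW is exactly $n\cdot\frac1n=1$, so the walk is confined to a ball of radius one, that is, to a region of size one curvature radius, and $d(x_0,x_n)\le 1$ always. Rescaling all distances by $n$ turns such a walk into a continuous $n$-step SAW with unit steps and hard-core radius $c$ in a copy of $\H^d$ of curvature $-1/n^2$, and transforms the quantity of interest into $\mathbb{E}_{n,1/n}(d(x_0,x_n))=\frac1n\,\mathbb{E}(\widetilde d_n)$, where $\widetilde d_n$ denotes the endpoint distance of the rescaled walk. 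In both pictures the relevant region is comparable to a single curvature radius, hence bi-Lipschitz to a Euclidean ball with constants independent of $n$. It therefore suffices to prove the sub-ballisticity estimate $\mathbb{E}(\widetilde d_n)=o(n)$.

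To see why this regime is opposite to that of Theorem \ref{main}, recall that the bridging construction of Lemma \ref{RD} inserts a geodesic segment of $r$ steps and needs $r>3\delta+1-c$, so the walk must cover a hyperbolic distance of order $\delta$ before the thin-triangle inequality can be brought to bear. When the step size is tiny compared with the curvature radius this never happens at the scale on which the walk lives, the negative curvature is invisible, and one expects Euclidean, hence sub-ballistic, behaviour.

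To establish $\mathbb{E}(\widetilde d_n)=o(n)$ I would first dispose of the free, unconditioned walk. By rotational symmetry each increment is centred up to a curvature correction of order $1/n^2$, so the systematic radial drift over the whole walk is $O(1/n)$ and a second-moment estimate gives $\mathbb{E}(d(x_0,x_n))=O(n^{-1/2})$ for the free law, i.e. $\mathbb{E}(\widetilde d_n^{\,\mathrm{free}})=O(\sqrt n)=o(n)$; note that the positive speed of free random walk on $\H^d$ likewise requires steps of curvature-radius size and so disappears here. The decisive point is then to show that conditioning on self-avoidance does not restore ballistic speed. One route is an absolute-continuity argument bounding below the self-avoidance probability of the free walk and controlling the Radon--Nikodym derivative of $\mathbb{P}_{n,1/n}$ with respect to the free law so as to exclude a linear drift; another is to adapt to this continuous, quasi-Euclidean setting the sub-ballisticity techniques for lattice self-avoiding walk in \cite{DH,L,MW,P}, which rest on a pattern theorem or on subadditivity of the endpoint distance.

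The main obstacle is exactly this last transfer. In low dimensions self-avoidance alters the displacement exponent and cannot be treated as a perturbation of the free walk, so proving the strict inequality $\nu<1$ for the continuum model is genuinely hard and is the continuous counterpart of difficult statements in the self-avoiding walk literature. A secondary difficulty is to make the free-walk estimate and the Euclidean comparison uniform in $n$, controlling the $O(1/n^2)$ curvature corrections to the symmetry of the step distribution and to the bi-Lipschitz constants on the radius-one ball so that no estimate degrades as $n\to\infty$.
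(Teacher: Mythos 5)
This statement is one of the paper's \emph{conjectures}: it appears in the ``Open problems'' section and the authors offer no proof, only the scaling heuristic that the $(n,\varepsilon)$-SAW on $\H^d$ is the $(n,1)$-SAW on $\frac{1}{\varepsilon}\H^d$ and that the latter space flattens out to $\R^d$ as $\varepsilon\to 0$. Your proposal reproduces exactly that heuristic: the identity $\mathbb{E}_{n,1/n}(d(x_0,x_n))=\frac{1}{n}\mathbb{E}(\widetilde d_n)$ and the reduction to showing $\mathbb{E}(\widetilde d_n)=o(n)$ for a unit-step SAW in a space of curvature $-1/n^2$ are correct and are the same reduction the authors have in mind. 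The trivial remark $d(x_0,x_n)\le 1$ and the bound $O(n^{-1/2})$ for the \emph{free} walk are also fine. None of this, however, constitutes progress on the conjecture itself.

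The genuine gap is the one you name and then defer: sub-ballisticity of the \emph{conditioned} walk, uniformly in the curvature parameter. That is not a technical loose end --- it is the entire content of the conjecture. The two routes you sketch both fail as stated. An absolute-continuity argument cannot work in low dimensions precisely because, as you concede, self-avoidance changes the displacement exponent, so $\Pr_{n,1/n}$ is not a controllable perturbation of the free law at the relevant scale; and even in high dimensions, bounding the Radon--Nikodym derivative well enough to transfer an $o(n)$ displacement bound requires lace-expansion-type input that does not exist for this continuum hard-core model. Adapting Duminil-Copin--Hammond's sub-ballisticity theorem is likewise not a routine transfer: their argument relies on the lattice structure (bridges, the Hammersley--Welsh decomposition, and a multi-valued-map / pattern argument tied to the discrete symmetry group), and no continuum analogue is known; moreover you would need the resulting bound to be uniform over the family of spaces $\frac{1}{\varepsilon}\H^d$ rather than for a single fixed geometry. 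A further soft point: even granting Euclidean-like behaviour, you would need strict sub-ballisticity $\nu<1$, not merely the absence of the hyperbolic ballisticity mechanism of Lemma \ref{RD} --- ruling out one cause of linear speed does not rule out linear speed. As it stands your text is a well-organized restatement of why the conjecture is plausible and why it is open, not a proof.
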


\begin{conjecture}
There exists a constant $1>\beta=\beta(d,c)>0$ such that $$\lim_{n\to\infty} \mathbb{E}_{n,n^{-\beta}}(d(x_0,x_n))$$ exists and is not zero (possibly up to logarithmic factors).
\end{conjecture}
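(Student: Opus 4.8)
The natural candidate is $\beta=\nu$, where $\nu=\nu(d)\in[1/2,1)$ is the Flory exponent governing the mean end-to-end distance of the self avoiding walk on $\R^d$; thus $\nu=1/2$ for $d\geq 4$ (with a logarithmic correction at the upper critical dimension $d=4$) and $\nu>1/2$ for $d=2,3$. The heuristic behind this choice is a crossover. At scales much below the curvature radius, which is of order $1$, the space $\H^d$ is indistinguishable from $\R^d$, so the $(n,\varepsilon)$-SAW behaves like a Euclidean self avoiding walk of $n$ steps of length $\varepsilon$, whose diameter is of order $\varepsilon n^{\nu}$; once the walk reaches scale of order $1$, the ballistic mechanism of Theorem \ref{main} takes over. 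Writing $\varepsilon=n^{-\beta}$, the Euclidean diameter is of order $n^{\nu-\beta}$, which tends to $0$ when $\beta>\nu$ (so that $\mathbb{E}_{n,\varepsilon}(d(x_0,x_n))\to 0$, consistently with the first conjecture, where $\beta=1$) and diverges when $\beta<\nu$ (so that the walk becomes ballistic and $\mathbb{E}_{n,\varepsilon}(d(x_0,x_n))\sim n\varepsilon=n^{1-\beta}\to\infty$). The borderline value $\beta=\nu$ is the unique scale at which the walk explores a region of diameter of order $1$, and there I would expect $\mathbb{E}_{n,n^{-\nu}}(d(x_0,x_n))$ to converge to a strictly positive finite constant, up to a power of $\log n$ when $d=4$.

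The plan is as follows. First, I would fix a basepoint, pass to geodesic normal coordinates on a ball of radius of order $1$ around it, and compare the hyperbolic SAW measure with the Euclidean SAW measure conditioned to remain in that ball: in these coordinates the metric and the uniform law on the $\varepsilon$-sphere differ from their Euclidean counterparts by a factor $1+O(\varepsilon^2)$ per step, which should yield a controllable Radon--Nikodym density between the two measures. Second, I would invoke the scaling limit of the Euclidean self avoiding walk. For $d\geq 5$ the lace expansion (Hara--Slade) gives convergence of the rescaled walk to Brownian motion; since the diffusive time $n\varepsilon^2=n^{1-2\beta}$ is of order $1$ precisely at $\beta=1/2=\nu$, the $(n,n^{-1/2})$-SAW should converge to a time change of Brownian motion on $\H^d$ run for a fixed time. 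Third, from this path convergence I would deduce convergence of the law of $d(x_0,x_n)$, and hence of its expectation, to $\mathbb{E}\,d(o,W)$, where $W$ is the endpoint of the limiting diffusion; the limit is nonzero because $W\neq o$ almost surely, and finite because $d(o,W)$ inherits Gaussian tails from the Euclidean estimate.

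The hard part will be the second step in low dimensions: for $d=2,3,4$ the existence of the scaling limit of the self avoiding walk---indeed the very existence of $\nu$---is a central open problem, so the argument is unconditional only for $d\geq 5$, where self avoidance is irrelevant and the limit is hyperbolic Brownian motion; the conjectural convergence to $\mathrm{SLE}_{8/3}$ would settle $d=2$. A second, specifically hyperbolic, difficulty is that at the critical scale the walk occupies a region of diameter of order $1$, so the curvature corrections in the first step are themselves of order $1$ rather than vanishing; one must show that the accumulated $O(\varepsilon^2)$ per-step distortions assemble into a convergent density that deforms Euclidean Brownian motion into its hyperbolic counterpart, while preserving the tightness needed to pass to the limit.
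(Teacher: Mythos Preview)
This statement is a \emph{conjecture} in the paper, listed among the open problems; the paper offers no proof, only a heuristic paragraph of motivation. Your crossover heuristic (rescale so that curvature vanishes, compare with Euclidean SAW, identify the critical $\beta$ with the displacement exponent of the $\R^d$ walk) is essentially the same as the paper's motivating discussion, so at the level of intuition you are aligned with the authors. One point of disagreement: the paper explicitly writes $\beta=\beta(d,c)$ and says it expects $\beta$ to be \emph{increasing in $c$}, whereas you propose $\beta=\nu(d)$ independent of $c$. Universality considerations favour your view, but you should flag that you are disagreeing with the stated conjecture on this point.

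As a proof, however, your proposal has genuine gaps beyond the ones you already concede for $d\le 4$. Even in the ``unconditional'' regime $d\ge 5$: (i) the Hara--Slade result you invoke is for nearest-neighbour SAW on $\Z^d$, not for the continuous hard-sphere model on $\R^d$ considered here; transferring the lace expansion to this setting is itself nontrivial and, to my knowledge, not in the literature in the form you need. (ii) Your step one proposes a Radon--Nikodym comparison with per-step error $1+O(\varepsilon^2)$; over $n$ steps with $\varepsilon=n^{-1/2}$ the accumulated density is $\exp(O(1))$, not $1+o(1)$, so this is not a perturbative comparison but a genuine change of measure whose limit you would have to identify --- you note this difficulty at the end, but it undercuts the earlier claim that step one ``should yield a controllable Radon--Nikodym density''. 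In short, what you have written is a reasonable research programme and matches the paper's own heuristics, but it is not a proof, and the paper does not claim one either.
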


We expect that $\beta$ is increasing in $c$ because when the self-avoidance restrictions are stronger, the walk tends to move further away from the origin.

Let us mention the observation motivating these conjectures. Given $\varepsilon>0$, let $\frac{1}{\varepsilon}\H^d$ denote $\H^d$ with the metric scaled by $a$. It is clear that the $(n,\varepsilon)$-SAW on $\H^d$ coincides with the $(n,1)$-SAW on $\frac{1}{\varepsilon}\H^d$. Moreover, $d(x_0,x_n)=\varepsilon d_{\varepsilon}(x_0,x_n)$, where $d_{\varepsilon}$ is the distance function on $\frac{1}{\varepsilon}\H^d$. As $\varepsilon$ tends to $0$, the curvature of $\frac{1}{\varepsilon}\H^d$ tends to $0$ as well and $\frac{1}{\varepsilon}\H^d$ looks more and more like $\mathbb{R}^d$. This indicates that for certain values of $n$ and $\varepsilon$, the $(n,\varepsilon)$-SAW on $\H^d$ behaves like $(n,1)$-SAW on $\mathbb{R}^d$. In fact, we can make a more precise prediction for the values of $n$ and $\varepsilon$ for which this holds. On balls of size smaller than $\frac{1}{\varepsilon}$, $\frac{1}{\varepsilon}\H^d$ looks similar to $\R^d$, while on balls of size larger than $\frac{1}{\varepsilon}\H^d$, the two spaces look different. If the $(n,1)$-SAW on $\mathbb{R}^d$ is typically at distance $n^{\beta}$, then this indicates that the
$(n,1)$-SAW on $n^{-\beta}\H^d$ behaves like the $(n,1)$-SAW on $\mathbb{R}^d$. This reasoning leads naturally to the above questions.

As $c$ tends to $0$, we expect that $\beta$ tends to $1/2$ because in the limit we obtain a random walk. Also, for $d\geq 5$, we expect that $\beta$ is always equal to $1/2$, as in this case, the scaling limit of SAW on $\mathbb{Z}^d$ is the Brownian motion \cite{HS}.

\end{document}